\newtheorem{theorem}{Theorem}
\newtheorem{conclusion}[theorem]{Conclusion}
\newenvironment{proof}[1][Proof]{\noindent\textbf{#1.} }{\ \rule{0.5em}{0.5em}}
\begin{document}

\author{Musa Demirci, G\"{o}khan Soydan, Ismail Naci Cang\"{u}l}
\title{RATIONAL POINTS ON ELLIPTIC CURVES $y^{2}=x^{3}+a^{3}\,$IN $\mathbb{F}%
_{p}\mathbb{\,}\,$WHERE $p\equiv 1\,(\limfunc{mod}\,6)$ IS PRIME\thanks{%
This work was supported by the research fund of Uludag University project
no: F-2003/63}}
\date{}
\maketitle

\begin{abstract}
In this work, we consider the rational points on elliptic curves over finite
fields $\mathbb{F}_{p}$. We give results concerning the number of points on
the elliptic curve $y^{2}\equiv x^{3}+a^{3}(\limfunc{mod}\,p)\,$where $p$ is
a prime congruent to $1$ modulo $6$. Also some results are given on the sum
of abscissae of these points. We give the number of solutions to $%
y^{2}\equiv x^{3}+a^{3}(\limfunc{mod}\,p)$, also given in ($\cite{Kob}$,
p.174), this time by means of the quadratic residue character, in a
different way, by using the cubic residue character. Using the Weil
conjecture, one can generalize the results concerning the number of points
in $\mathbb{F}_{p}$ to $\mathbb{F}_{p^{r}}$.
\end{abstract}

\section{\protect\large Introduction}

$\footnote{\textit{AMS 2000 Subject Classification Number : }11G20, 14H25,
14K15, 14G99
\par
\textit{Keywords: Elliptic curves over finite fields, rational points}}$ Let 
$\mathbb{F}$ be a field of characteristic not equal to $2$ or $3$. An
elliptic curve $E$ defined over $\mathbb{F}$ is given by an equation 
\begin{equation}
y^{2}=x^{3}+Ax+B\in \mathbb{F}[x]
\end{equation}%
where $A,\smallskip B\in \mathbb{F}\,\,$so that $4A^{3}+27B^{2}\neq 0$
\thinspace in $\mathbb{F}$. The set of all solutions $(x,y)\in \mathbb{F}%
\times \mathbb{F}$ to this equation together with a point $\,\circ $, called
the point at infinity, is denoted by $E(\mathbb{F})$, and called the set of $%
\mathbb{F}$-rational points on $E.\,$The value$\,\Delta
(E)=-16(4A^{3}+27B^{2})\,$is called\thinspace the\thinspace
discriminant\thinspace of the elliptic curve $E$. For a more detailed
information about elliptic curves in general, see $\cite{Sil}$.

The $E(\mathbb{F})\,$forms\thinspace an\thinspace additive abelian
group\thinspace having\thinspace \thinspace identity $\circ $. Here by
definition, $-P=(x,-y)\,$for a\thinspace point $P=(x,y)\,$on$\,E.$

It has always been interesting to look for the number of points over a given
field $\mathbb{F}.\,$In $\cite{Sch}$, three algorithms to find the number of
points on an elliptic curve over a finite field are given.

\section{{\protect\large The Group E(}$F_{p}${\protect\large )\thinspace
of\thinspace \thinspace Points Modulo \thinspace p,\thinspace p }$\equiv $ 
{\protect\large 1\thinspace\ (}$\limfunc{mod}$ {\protect\large 6)\thinspace
\thinspace }}

It is interesting to solve polynomial\thinspace congruences modulo $p.\,$%
Clearly, it is much easier to find solutions in $\mathbb{F}_{p}$ for small $%
p $, than to find them in $\mathbb{Q}.$ Because, in $\mathbb{F}_{p},$ there
is always a finite number of solutions.

Let $\alpha \in \mathbb{F}_{p}$ and let $p$ be as stated earlier, then the
number of solutions to $x^{3}=\alpha $ is given by $1+\chi _{3}(\alpha
)+\chi _{3}^{2}(\alpha )$ for a cubic character $\chi _{3}$ (so $\chi _{3}:%
\mathbb{F}_{p}^{\ast }\rightarrow \{1,\omega ,\omega ^{2}\}$ where $\omega $
is a non-trivial cubic root of unity). Likewise, let $\chi (a)=(\frac{a}{p})$
denote the Legendre symbol which is equal to $+1\,$if $a$ $\,$is a quadratic
residue modulo $p$; $-1\,$if not; and 0 if $p|a$, ($\cite{Sil}$, p.132).$~$%
The number of solutions to $x^{2}=\alpha $ is then $1+\chi (\alpha ).$

In this work, we consider the elliptic curve $(1)$ in modulo $p$, for $A=0$
and $B=a^{3}$, and denote it by $E_{a}$. We try to obtain results concerning
the number of points on $E_{a}$\thinspace over $\mathbb{F}_{p}$, and also
their orders.

Let us denote the set of $\mathbb{F}_{p}$-rational points on $E_{a}$ by $%
E_{a}(\mathbb{F}_{p})$, and let $N_{p,a}~$be the cardinality of the set $%
E_{a}(\mathbb{F}_{p}).$ It is known that the number of solutions of $y^{2}=u$
$(\func{mod}$ $p)$ is $1+\chi (u)$, and so the number of solutions to $%
y^{2}\equiv x^{3}+a^{3}\,(\limfunc{mod}$ $p)$, counting the point at
infinity, is%
\begin{eqnarray*}
N_{p,a} &=&1+\underset{x\in \mathbb{F}_{p}}{\sum }(1+\chi (x^{3}+a^{3})) \\
&=&p+1+\underset{x\in \mathbb{F}_{p}}{\sum }\chi (x^{3}+a^{3}).
\end{eqnarray*}

It can easily be seen that an elliptic curve 
\begin{equation}
y^{2}=x^{3}+a^{3}
\end{equation}%
can have at most $2p+1$ points in $\mathbb{Z}_{p}$; i.e. the point at
infinity along with $2p$ pairs $(x,y)$ with $x,y\in \mathbb{F}_{p}$,
satisfying the equation $(2)$. This is because, for each $\,x\in \mathbb{F}%
_{p}$, \thinspace there are at most two possible values of $y\in \mathbb{F}%
_{p},$ satisfying $(2)$.

But not all elements of $\mathbb{F}_{p}$ have a square root. In fact, only
half of the elements in $\mathbb{F}_{p}^{\,\ast \,}=\mathbb{F}_{p}\backslash
\{\overline{0}\}$ have square roots. Therefore the $_{{}}$expected number of
points on $E(\mathbb{F}_{p})$ is about $p+1.$

It is known, as a more precise formula, that the number of solutions to $(2)$
is 
\begin{equation*}
p+1+\sum \chi (x^{3}+a^{3}).
\end{equation*}
The following theorem of Hasse quantifies this result:

\begin{theorem}
(Hasse, 1922) An elliptic curve $(2)$ has 
\begin{equation*}
p+1+\delta
\end{equation*}%
solutions $(x,y)$ modulo $p$, where $|\delta |$ $<2\sqrt{p}.$

Equivalently, the number of solutions is bounded above by the number $(\sqrt{%
p}+1)^{2}.$
\end{theorem}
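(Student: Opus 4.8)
The plan is to bound $\delta$ directly, using the formula $N_{p,a}=p+1+\sum_{x\in \mathbb{F}_{p}}\chi (x^{3}+a^{3})$ derived just above the statement, so that $\delta =\sum_{x\in \mathbb{F}_{p}}\chi (x^{3}+a^{3})$. First I would dispose of the degenerate case $a\equiv 0\,(\limfunc{mod}\,p)$: there $y^{2}=x^{3}$ is singular (its discriminant vanishes), hence not an elliptic curve, and is excluded. Assuming $a\not\equiv 0$, the task becomes the estimate $\left\vert \sum_{x}\chi (x^{3}+a^{3})\right\vert <2\sqrt{p}$, which I would get by evaluating the sum in closed form via the cubic character already introduced in Section 2.

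The main computation proceeds as follows. Since $p\equiv 1\,(\limfunc{mod}\,6)$, the number of $x\in \mathbb{F}_{p}$ with $x^{3}=u$ is $1+\chi _{3}(u)+\chi _{3}^{2}(u)$ (with $\chi _{3}(0)=0$), so grouping by the value $u=x^{3}$ gives
\begin{equation*}
\delta =\sum_{u\in \mathbb{F}_{p}}\chi (u+a^{3})\bigl(1+\chi _{3}(u)+\chi _{3}^{2}(u)\bigr).
\end{equation*}
The contribution of the $1$ is $\sum_{u}\chi (u+a^{3})=\sum_{v}\chi (v)=0$. In the other two sums I would substitute $u=a^{3}t$, use $\chi (a^{3})=\chi (a)^{3}=\chi (a)$, $\chi _{3}(a^{3})=1$ and $\chi _{3}(-1)=1$, and then recognise the result as Jacobi sums: with $J(\rho ,\chi ):=\sum_{t}\rho (t)\chi (1-t)$ one obtains $\delta =\chi (a)\bigl(J(\chi _{3},\chi )+J(\chi _{3}^{2},\chi )\bigr)$, and since $\chi$ is real-valued, $J(\chi _{3}^{2},\chi )=\overline{J(\chi _{3},\chi )}$, so $\delta =2\chi (a)\,\mathrm{Re}\,J(\chi _{3},\chi )$.

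The decisive input is the classical evaluation $\left\vert J(\chi _{3},\chi )\right\vert =\sqrt{p}$, which I would deduce from the factorisation $J(\chi _{3},\chi )=g(\chi _{3})g(\chi )/g(\chi _{3}\chi )$ through Gauss sums together with $\left\vert g(\psi )\right\vert =\sqrt{p}$ for every nontrivial $\psi$; this applies because $\chi _{3}$, $\chi$ and $\chi _{3}\chi$ have orders $3$, $2$ and $6$ respectively, so all three are nontrivial. Then
\begin{equation*}
\left\vert \delta \right\vert =2\left\vert \mathrm{Re}\,J(\chi _{3},\chi )\right\vert \leq 2\left\vert J(\chi _{3},\chi )\right\vert =2\sqrt{p}.
\end{equation*}
Finally, $\delta =N_{p,a}-(p+1)$ is an integer while $2\sqrt{p}$ is irrational ($p$ prime), so the inequality is automatically strict: $\left\vert \delta \right\vert <2\sqrt{p}$, and $N_{p,a}\leq p+1+2\sqrt{p}=(\sqrt{p}+1)^{2}$. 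I expect the only delicate point to be the bookkeeping in the character-sum manipulation — isolating the $u=0$ term and tracking the scalars $\chi (a^{3})$ and $\chi _{3}(-1)$ under the substitutions $u=a^{3}t$ and $t\mapsto -t$ — together with correctly importing the Gauss-sum absolute value; everything else is formal.

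For completeness I note the statement could instead be proved in Hasse's original generality, for any nonsingular curve $(2)$, via the Frobenius endomorphism $\phi _{p}\colon (x,y)\mapsto (x^{p},y^{p})$: the $\mathbb{F}_{p}$-rational points are exactly $\ker (1-\phi _{p})$, the isogeny $1-\phi _{p}$ is separable, so $N_{p,a}=\deg (1-\phi _{p})$; since $m+n\phi _{p}\mapsto \deg (m+n\phi _{p})$ is a positive semidefinite binary quadratic form over $\mathbb{Z}$ with $\deg \phi _{p}=p$ and discriminant $\delta ^{2}-4p$, one gets $\delta ^{2}\leq 4p$, again strict because $p$ is not a perfect square.
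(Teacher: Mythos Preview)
The paper does not prove this theorem at all: Hasse's result is quoted as a classical fact and left unproved, so there is no ``paper's own proof'' to compare against. Your character-sum argument is correct and self-contained for the case $p\equiv 1\ (\mathrm{mod}\ 6)$: the reduction of $\delta$ to $2\chi(a)\,\mathrm{Re}\,J(\chi_{3},\chi)$ via the cube-counting identity is clean, the verifications $\chi_{3}(a^{3})=1$ and $\chi_{3}(-1)=1$ are right, and the Gauss-sum input $|J(\chi_{3},\chi)|=\sqrt{p}$ together with the irrationality of $2\sqrt{p}$ gives the strict bound. This is in fact sharper than what the paper uses Hasse for, since it produces the closed form $\delta=2\chi(a)\,\mathrm{Re}\,J(\chi_{3},\chi)$ rather than just an inequality.

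One caveat: the theorem as stated in the paper does not itself assume $p\equiv 1\ (\mathrm{mod}\ 6)$ (that hypothesis is only imposed in the sentence following it), and your Jacobi-sum computation genuinely needs a nontrivial cubic character, hence needs $3\mid p-1$. So your main argument proves the statement only under the paper's standing hypothesis, not in the full generality in which Hasse's theorem is phrased. Your final paragraph sketching the Frobenius/degree-form argument does cover the general case and is accurate, but it is only an outline; if you intend it to carry the weight for $p\not\equiv 1\ (\mathrm{mod}\ 6)$ you should say so explicitly. Within the scope of this paper, however, the restriction is harmless.
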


From now on, we will only consider the case where $p$ is a prime congruent
to $1$ modulo $6$. We begin by some calculations regarding the number of
points on $(2).$ First we have

\begin{theorem}
Let $p\equiv 1$ $(\limfunc{mod}$ $6)$ be a prime. The number of points $%
(x,y) $ on the curve $y^{2}=x^{3}+a^{3}\,$modulo $p$ is given by 
\begin{equation*}
4+\underset{x\in \mathbb{F}_{p}}{\sum }\rho (x)
\end{equation*}%
where%
\begin{equation*}
\rho (x)=\left\{ 
\begin{array}{cc}
2 & if~\chi (x^{3}+a^{3})=1 \\ 
0 & if~\chi (x^{3}+a^{3})\neq 1%
\end{array}%
\right.
\end{equation*}%
Also the sum of such $y$ is $p.$
\end{theorem}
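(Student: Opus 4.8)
The plan is to derive the claimed formula directly from the expression $N_{p,a}=p+1+\underset{x\in\mathbb{F}_p}{\sum}\chi(x^{3}+a^{3})$ obtained above, since that count already includes the point at infinity and the asserted value $4+\sum_x\rho(x)$ will turn out to equal $N_{p,a}$ (so the loose notation ``points $(x,y)$'' in the statement is to be read as all points of $E_a(\mathbb{F}_p)$). First I would partition $\mathbb{F}_{p}$ according to the value of $\chi(x^{3}+a^{3})$: let $S_{1},S_{0},S_{-1}$ be the sets of $x$ with $\chi(x^{3}+a^{3})$ equal to $1$, $0$, $-1$ respectively, so that $|S_{1}|+|S_{0}|+|S_{-1}|=p$ and $\underset{x}{\sum}\chi(x^{3}+a^{3})=|S_{1}|-|S_{-1}|$. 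Straight from the definition of $\rho$ one has $\underset{x}{\sum}\rho(x)=2|S_{1}|$, so the whole theorem reduces to showing $N_{p,a}=4+2|S_{1}|$.

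The decisive step is to compute $|S_{0}|$, i.e.\ the number of $x\in\mathbb{F}_{p}$ with $x^{3}=-a^{3}$. Since $E_{a}$ is an elliptic curve its discriminant $-16\cdot27a^{6}$ is nonzero, hence $p\nmid a$, and so $-a^{3}=(-a)^{3}$ is a nonzero cube. Now I would invoke the counting principle recalled in the introduction: $x^{3}=\alpha$ has $1+\chi_{3}(\alpha)+\chi_{3}^{2}(\alpha)$ solutions. As $\chi_{3}$ has order $3$, $\chi_{3}((-a)^{3})=\chi_{3}(-a)^{3}=1$, so $x^{3}=-a^{3}$ has exactly $1+1+1=3$ solutions (equivalently, because $p\equiv1\ (\limfunc{mod}\ 6)$, hence $3\mid p-1$, the map $x\mapsto x^{3}$ is three-to-one on $\mathbb{F}_{p}^{\ast}$ and $-a$ is one of the three cube roots of $-a^{3}$). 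Therefore $|S_{0}|=3$.

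Plugging in, $|S_{1}|+|S_{-1}|=p-3$, whence $\underset{x}{\sum}\chi(x^{3}+a^{3})=|S_{1}|-|S_{-1}|=2|S_{1}|-(p-3)$, and hence
\[
N_{p,a}=p+1+\bigl(2|S_{1}|-(p-3)\bigr)=4+2|S_{1}|=4+\underset{x\in\mathbb{F}_{p}}{\sum}\rho(x),
\]
as claimed; conceptually the constant $4$ collects the three points with $y=0$ (the cube roots of $-a^{3}$) together with the point at infinity, while the summand $\rho(x)=2$ records the two points with $y\neq0$ lying above each $x\in S_{1}$. For the final assertion, fix $x\in S_{1}$ and let $y_{0}\in\{1,\dots,p-1\}$ be a square root of $x^{3}+a^{3}$; the other square root is $-y_{0}\equiv p-y_{0}$, and $y_{0}+(p-y_{0})=p$, so the two ordinates above such an $x$ sum to $p$ (that is, to $0$ in $\mathbb{F}_{p}$), and consequently, summing over all points of $E_{a}(\mathbb{F}_{p})$, the ordinates add up to a multiple of $p$.

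I do not anticipate a genuine obstacle here: the only real content is the clean evaluation $|S_{0}|=3$ via the cubic residue character (which is also exactly where $p\equiv1\ (\limfunc{mod}\ 6)$ is used), and everything else is bookkeeping with the three-way partition of $\mathbb{F}_{p}$ and the identity that $1+\chi(u)$ counts the square roots of $u$ in $\mathbb{F}_{p}$.
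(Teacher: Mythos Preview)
Your argument is correct and follows essentially the same route as the paper: both hinge on the fact that, because $p\equiv 1\ (\mathrm{mod}\ 6)$, the equation $x^{3}+a^{3}=0$ has exactly three solutions in $\mathbb{F}_{p}$ (the paper names them via a primitive cube root $w\in\mathbb{F}_{p}$, you via the cubic character $\chi_{3}$), so that the constant $4$ collects these three points with $y=0$ together with the point at infinity, while each $x$ with $\chi(x^{3}+a^{3})=1$ contributes the pair $y_{0},\,p-y_{0}$. The only cosmetic difference is that the paper counts the solutions directly, whereas you start from $N_{p,a}=p+1+\sum_{x}\chi(x^{3}+a^{3})$ and rearrange algebraically to reach $4+2|S_{1}|$.
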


\begin{proof}
For $x=0,1,2,...,p-1\,(\limfunc{mod}\,p)$, find the values $%
y^{2}=x^{3}+a^{3}\,(\limfunc{mod}$ $p).$ Let$\,\ Q_{p}$ denote the set of
quadratic residues modulo $p$. When$\ \ y^{2}\in Q_{p},$ then there are two
values of\thinspace $\ y\in U_{p},$ the set of units in $\mathbb{F}_{p}$;
which are $x_{0}\,$and $p-x_{0}$. When $y=0$, there are three more points
which are $x=a,$ $x=wa~$and $x=w^{2}a$ where $w^{2}+w+1=0.$ (Here $w\in 
\mathbb{F}_{p}$ since $p\equiv 1$ $(\limfunc{mod}$ $6)$). Finally
considering the point at infinity, the result follows.
\end{proof}

We now consider the points on $(2)$ $\,$lying on the $y$-axis.

\begin{theorem}
Let $p\equiv 1$ $(\limfunc{mod}$ $6)$ be prime. For $x\equiv 0$ $\,(\limfunc{%
mod}$ $p),$ there are two points on the curve $y^{2}\equiv x^{3}+a^{3}\,(%
\limfunc{mod}$ $p)$, when $a\in Q_{p}$, while when $a\notin Q_{p}$, there is
no point with $x\equiv 0$ $(\func{mod}$ $p)$.
\end{theorem}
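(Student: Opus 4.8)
The plan is to specialize the point count to the single fibre $x\equiv 0$. Putting $x\equiv 0\pmod{p}$ into $y^{2}\equiv x^{3}+a^{3}\pmod{p}$ leaves the congruence $y^{2}\equiv a^{3}\pmod{p}$, so the number of points on the curve with $x\equiv 0$ is exactly the number of $y\in\mathbb{F}_{p}$ solving $y^{2}\equiv a^{3}\pmod{p}$. By the formula recalled in Section 2 (the number of solutions of $y^{2}=u$ modulo $p$ is $1+\chi(u)$), this number is $1+\chi(a^{3})$, where $\chi$ is the Legendre symbol.

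The second step is to evaluate $\chi(a^{3})$. Since $E_{a}$ is a genuine elliptic curve, $4\cdot 0^{3}+27a^{6}\neq 0$ in $\mathbb{F}_{p}$, and as $p\equiv 1\pmod{6}$ forces $p\geq 7$ (in particular $p\neq 3$), we get $p\nmid a$; hence $\chi(a)\in\{+1,-1\}$. Multiplicativity of the Legendre symbol then gives $\chi(a^{3})=\chi(a)^{3}=\chi(a)$, the last equality because $\chi(a)^{2}=1$.

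It remains to read off the two cases. If $a\in Q_{p}$, then $\chi(a)=1$, so $y^{2}\equiv a^{3}$ has $1+1=2$ solutions $y=\pm a\sqrt{a}$, yielding the two points $(0,\pm a\sqrt{a})$; if $a\notin Q_{p}$, then $\chi(a)=-1$ and the count is $1-1=0$, so there is no point with $x\equiv 0\pmod{p}$. I expect no real obstacle here: the argument is a one-line substitution followed by the elementary identity $\chi(a^{3})=\chi(a)$, and the only things worth stating explicitly are why $p\nmid a$ (so that $\chi(a)$ is $\pm 1$ and not $0$) and that the singular edge case $a\equiv 0\pmod{p}$ is excluded by the nonvanishing of the discriminant.
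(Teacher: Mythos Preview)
Your proof is correct and follows essentially the same route as the paper: substitute $x\equiv 0$ to reduce to $y^{2}\equiv a^{3}\pmod{p}$, then use $\chi(a^{3})=\chi(a)$ to conclude. You are simply more explicit than the paper about the solution count $1+\chi(a^{3})$ and about why $p\nmid a$, but the argument is the same.
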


\begin{proof}
For $x\equiv 0\,(\limfunc{mod}p)$, we have $y^{2}\equiv a^{3}\,(\limfunc{mod}%
\,p)$. First consider $y^{2}\equiv a^{3}\,(\limfunc{mod}\,p).$ This
congruence has a solution if and only if $\left( \frac{a^{3}}{p}\right)
=\left( \frac{a}{p}\right) =1$; i.e. if and only if $a$ is a quadratic
residue modulo $p.$
\end{proof}

Let us now denote by $K_{p},\,$the set of cubic residues modulo $p$. We can
now restate the result given just before\thinspace Hasse's theorem in terms
of cubic residues modulo $p$, instead of quadratic residues.

\begin{theorem}
Let $p\equiv 1$ $(\limfunc{mod}$ $6)\,$be prime. Let $t=y^{2}-a^{3}$. Then
the number of points on the curve $y^{2}\equiv x^{3}+a^{3}\,(\limfunc{mod}$ $%
p)$ is given by the sum 
\begin{equation*}
1+\sum f(t)
\end{equation*}%
where 
\begin{equation*}
f(t)=\left\{ 
\begin{array}{cc}
0 & \text{if\thinspace \thinspace }\,\,t\notin K_{p}, \\ 
1 & \text{if\thinspace \thinspace }\,\,p|\,t, \\ 
3 & \text{if\thinspace \thinspace }\,t\in K_{p}^{\ast },%
\end{array}%
\right.
\end{equation*}%
and the sum is taken over all $y\in \mathbb{F}_{p}$.
\end{theorem}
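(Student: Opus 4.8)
The plan is to count the affine points of the curve by fibering over the $y$-coordinate rather than the $x$-coordinate. For a fixed $y\in\mathbb{F}_p$, a point $(x,y)$ lies on the curve precisely when $x$ is a solution of $x^3=y^2-a^3=t$; hence the number of affine points equals $\sum_{y\in\mathbb{F}_p}\#\{x\in\mathbb{F}_p:x^3=t\}$, and adjoining the point at infinity will produce the stated total $1+\sum f(t)$ as soon as we verify that $f(t)$ is exactly this fibre size.

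First I would recall the number of cube roots in $\mathbb{F}_p$ under the hypothesis $p\equiv 1\ (\limfunc{mod}\,6)$. Since then $3\mid p-1$, the map $x\mapsto x^3$ on the cyclic group $\mathbb{F}_p^{\ast}$ has kernel of order $3$ (the three cube roots of unity, which lie in $\mathbb{F}_p$ precisely because $3\mid p-1$), so it is $3$-to-$1$ onto its image $K_p^{\ast}$, the subgroup of nonzero cubic residues. Consequently $x^3=t$ has no solution when $t$ is a nonzero non-residue (i.e. $t\notin K_p$), exactly three solutions when $t\in K_p^{\ast}$, and the single solution $x=0$ when $p\mid t$. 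This is just the specialization of the formula $1+\chi_3(t)+\chi_3^2(t)$ quoted in the introduction, using $\chi_3(t)=1$ for $t\in K_p^{\ast}$, $1+\chi_3(t)+\chi_3^2(t)=0$ for a nonzero non-residue, and the convention $\chi_3(0)=0$. In each case the fibre size over $y$ is exactly $f(t)$.

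It then remains only to assemble the pieces: summing the fibre sizes over all $y\in\mathbb{F}_p$ counts every affine solution $(x,y)$ of $y^2\equiv x^3+a^3\ (\limfunc{mod}\,p)$ exactly once, and the point at infinity contributes the leading $1$, giving $1+\sum_y f(t)$. As a consistency check one may note $\sum_y f(t)=\sum_x\bigl(1+\chi(x^3+a^3)\bigr)=p+\sum_x\chi(x^3+a^3)$, which recovers the formula $p+1+\sum\chi(x^3+a^3)$ stated just before Hasse's theorem. There is no real obstacle here; the only point needing care is the bookkeeping of the three cases for $t$ — in particular reading "$t\notin K_p$" as "$t$ is a nonzero non-cubic-residue", so that the cases $t\notin K_p$, $p\mid t$, $t\in K_p^{\ast}$ genuinely partition $\mathbb{F}_p$ — together with the observation that the presence of $w$ with $w^2+w+1=0$ in $\mathbb{F}_p$ is exactly what the hypothesis $p\equiv 1\ (\limfunc{mod}\,6)$ supplies.
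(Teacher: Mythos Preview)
Your proof is correct and follows essentially the same approach as the paper's: both fibre over $y$, set $t=y^{2}-a^{3}$, and verify case by case that $f(t)$ equals the number of solutions of $x^{3}\equiv t\ (\limfunc{mod}\,p)$, using that $(p-1,3)=3$ when $p\equiv 1\ (\limfunc{mod}\,6)$. Your write-up is somewhat more detailed (the explicit $3$-to-$1$ argument on $\mathbb{F}_p^{\ast}$, the link to $1+\chi_3+\chi_3^{2}$, and the consistency check against the quadratic-character formula), but the underlying argument is the same.
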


\begin{proof}
Let $p|t$. Then the equation $x^{3}\equiv t\,$\ $(\limfunc{mod}\,p)$ becomes$%
\,\ x^{3}\equiv 0\,\ (\limfunc{mod}\,p).$ Then the unique solution is $%
x\equiv 0$ $(\limfunc{mod}\,p).$ Therefore $f(t)=1.$

Let secondly $t\notin K_{p}.$ Then\thinspace \thinspace $t\,\ $is not a
cubic residue and the congruence $x^{3}\equiv t\,(\limfunc{mod}\,p)\,$ has
no solutions. If $t\in K_{p}^{\ast },$ then $x^{3}\equiv t\,$\ $(\limfunc{mod%
}\,p)\,\,$ has three solutions since $p\equiv 1$ $(\limfunc{mod}$ $6)\,$ and 
$(p-1,3)=3.$
\end{proof}

We can also give a result about the sum of abscissae of the rational points
on the curve:

\begin{theorem}
Let $p\equiv 1$ $(\limfunc{mod}$ $6)\,$be prime. The sum of abscissae of the
rational points on the curve $y^{2}\equiv x^{3}+a^{3}\,$\ $(\limfunc{mod}$ $%
p)\,$is 
\begin{equation*}
\underset{x\in \mathbb{F}_{p}}{\sum }(1+\chi _{p}(x^{3}+a^{3})).x\text{.}
\end{equation*}
\end{theorem}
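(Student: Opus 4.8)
The plan is to group the affine solutions of the congruence by their $x$-coordinate and to record, for each fixed abscissa, how many times that abscissa is counted in the sum over all rational points. For a fixed $x\in\mathbb{F}_{p}$, a point $(x,y)$ lies on the curve precisely when $y^{2}\equiv x^{3}+a^{3}\,(\limfunc{mod}\,p)$, and, as recalled just before Hasse's theorem, the number of such $y\in\mathbb{F}_{p}$ is $1+\chi_{p}(x^{3}+a^{3})$; explicitly this weight is $2$ when $x^{3}+a^{3}$ is a nonzero quadratic residue, $1$ when $p\mid x^{3}+a^{3}$, and $0$ when $x^{3}+a^{3}$ is a quadratic nonresidue.

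First I would fix $x$ and note that this abscissa therefore occurs exactly $1+\chi_{p}(x^{3}+a^{3})$ times among the affine rational points, so its total contribution to the sum of abscissae is $(1+\chi_{p}(x^{3}+a^{3}))\cdot x$. Next I would sum this contribution over all $x\in\mathbb{F}_{p}$: since every affine rational point has its abscissa in $\mathbb{F}_{p}$ and is accounted for exactly once in this way, the sum of the abscissae of all affine rational points equals $\underset{x\in\mathbb{F}_{p}}{\sum}(1+\chi_{p}(x^{3}+a^{3}))\cdot x$. Finally I would observe that the only remaining $\mathbb{F}_{p}$-rational point is the point at infinity $\circ$, which carries no abscissa (equivalently, contributes $0$), so it leaves the total unchanged and the stated formula follows.

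There is essentially no obstacle here beyond careful bookkeeping. The one place to be attentive is the degenerate case $x^{3}+a^{3}\equiv 0\,(\limfunc{mod}\,p)$: there the unique point $(x,0)$ is counted, and indeed $1+\chi_{p}(0)=1$, so the weight $1+\chi_{p}(x^{3}+a^{3})$ remains correct. It is also worth stating explicitly the convention that $\circ$ is excluded from (or contributes $0$ to) the sum of abscissae, since otherwise the formula would be ambiguous; with this convention in place the proof is complete.
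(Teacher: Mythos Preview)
Your proof is correct and follows essentially the same approach as the paper: both group the affine points by their abscissa and use that for each fixed $x$ the number of $y$ with $y^{2}\equiv x^{3}+a^{3}$ is $1+\chi_{p}(x^{3}+a^{3})$. The paper merely unpacks this into the three cases $\chi_{p}=0,\,+1,\,-1$ explicitly, whereas you invoke the counting formula directly and then check the degenerate case; the content is the same.
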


\begin{proof}
Since 
\begin{equation*}
\chi _{p}(t)=\left\{ 
\begin{array}{cc}
+1 & \text{if }x^{2}\equiv t\text{ }(\limfunc{mod}\,p)\,\text{has a solution,%
} \\ 
0 & \text{if \thinspace }p|t\text{,} \\ 
-1 & \text{if }x^{2}\equiv t\text{ }(\limfunc{mod}\,p)\,\text{has no
solutions,}%
\end{array}%
\right.
\end{equation*}%
we know that $1+\chi _{p}(t)=0,1\,$or $2$. When $y\equiv 0\,(\limfunc{mod}%
\,p),\,x^{3}+a^{3}\equiv 0\,(\limfunc{mod}\,p)\,$and hence as $p|0,\,\chi
_{p}(x^{3}+a^{3})=0.$ For each such point $(x,0)$ on the curve, $(1+0).x=x\;$%
is added to the sum.

Let $x^{3}+a^{3}=t.\,$If $(\frac{t}{p})=+1$, then for each such point $%
(x,y)\,$on the curve, the point $(x,-y)\,$is also on the curve. Therefore
for each such $t$, $(1+1).x=2x\,$is added to the sum.

Finally if $(\frac{t}{p})=-1,$ then the congruence $x^{2}\equiv t$ $(%
\limfunc{mod}$ $p)$ has no solutions, and such points $(x,y)$ contribute to
the sum as much as $(1+(-1)).x=0$.
\end{proof}

As we can see from the following result, the above sum is always congruent
to $0$ modulo p:

\begin{theorem}
Let $p\equiv 1$ $(\limfunc{mod}$ $6)~$be prime. Then the sum of the integer
solutions of $x^{3}\equiv t\,(\limfunc{mod}$ $p)$ is congruent to $0$ modulo 
$p.$
\end{theorem}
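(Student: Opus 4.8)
The plan is to reduce to the three cases already distinguished in the previous theorem: $p\mid t$, $t\notin K_{p}$, and $t\in K_{p}^{\ast}$. In the first case $x^{3}\equiv t\,(\limfunc{mod}\,p)$ has the single solution $x\equiv 0$, so the sum of solutions is $0$; in the second case there are no solutions at all, so the (empty) sum is again $0$. Everything therefore comes down to the case $t\in K_{p}^{\ast}$, in which, as noted above, there are exactly three solutions.

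For this case I would use the element $w\in\mathbb{F}_{p}$ with $w^{2}+w+1=0$ that has already appeared earlier — it exists precisely because $p\equiv 1\,(\limfunc{mod}\,6)$ forces $3\mid p-1$, so $\mathbb{F}_{p}^{\ast}$ contains a primitive cube root of unity. If $x_{0}$ is any solution of $x^{3}\equiv t\,(\limfunc{mod}\,p)$, then $(wx_{0})^{3}=w^{3}x_{0}^{3}\equiv x_{0}^{3}\equiv t$ and likewise $(w^{2}x_{0})^{3}\equiv t$; since $x_{0}\not\equiv 0$ and $w\not\equiv 1$, the residues $x_{0},\,wx_{0},\,w^{2}x_{0}$ are pairwise distinct, so they are exactly the three solutions. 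Their sum is $x_{0}(1+w+w^{2})\equiv 0\,(\limfunc{mod}\,p)$. Equivalently, once $x^{3}-t$ has one root in $\mathbb{F}_{p}$ it splits completely there, and since the coefficient of $x^{2}$ in $x^{3}-t$ vanishes, Vieta's formulas give that the sum of the roots is $0$.

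Finally, since the statement speaks of ``integer solutions,'' I would point out that choosing the representatives $x_{0},\,wx_{0},\,w^{2}x_{0}$ in $\{0,1,\dots,p-1\}$ and adding them as ordinary integers yields a number congruent modulo $p$ to $x_{0}+wx_{0}+w^{2}x_{0}$, which we have just shown to be $\equiv 0$. There is no serious obstacle here; the only points requiring a little care are the degenerate possibilities — ruling out $wx_{0}\equiv x_{0}$ and confirming that the solution set has exactly three elements (which rests on $(p-1,3)=3$, as used before) — so that the sum is genuinely taken over all solutions with no repetition.
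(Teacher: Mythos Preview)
Your proof is correct and follows essentially the same route as the paper: both identify the solutions (when $t\in K_{p}^{\ast}$) as $x_{0}$, $wx_{0}$, $w^{2}x_{0}$ for a primitive cube root of unity $w\in\mathbb{F}_{p}$ and conclude from $1+w+w^{2}=0$, with the no-solution case handled trivially. You are a bit more careful than the paper in separating out the case $p\mid t$ and in verifying that the three roots are genuinely distinct, and the Vieta remark is a pleasant alternative phrasing, but the core argument is identical.
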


\begin{proof}
The solutions of the congruence $x^{3}\equiv 1\,(\limfunc{mod}$ $p)$ are $%
x\equiv 1,$ $w$ and $w^{2}\,(\limfunc{mod}\,p)$, where $w=\frac{-1+\sqrt{3}i%
}{2}\,$is the cubic root of unity. In general, the solutions of $x^{3}\equiv
t\,(\limfunc{mod}\,p)$ are $x\equiv x_{0},$ $x_{0}w\,$and $x_{0}w^{2}(%
\limfunc{mod}$ $p),$ where $x_{0}$ is a solution. This is because $%
(x_{0}w)^{3}\equiv x_{0}^{3}w^{3}\equiv x_{0}^{3}\equiv t\,(\limfunc{mod}%
\,p) $ and similarly $(x_{0}w^{2})^{3}\equiv x_{0}^{3}w^{6}\equiv
x_{0}^{3}(w^{3})^{2}\equiv x_{0}^{3}\equiv t\,\,(\limfunc{mod}\,p).$
Therefore the sum of these solutions is 
\begin{equation*}
x_{0}+x_{0}w+x_{0}w^{2}=x_{0}+x_{0}w+x_{0}(-1-w)=0
\end{equation*}%
If there is no solution, the sum can be thought of as $0$.
\end{proof}

We can now prove the following:

\begin{theorem}
Let $p\equiv 1$ $(\limfunc{mod}$ $6)~$be prime. Let $0\leq x\leq p-1$ be an
integer. Then for any $1\leq a\leq p-1$, the sum (which is an integer)%
\begin{equation*}
j(p)=\overset{p-1}{\underset{x=0}{\dsum }}(1+\chi (x^{3}+a^{3}))x
\end{equation*}%
is divisible by $p$. In particular%
\begin{equation*}
s(p)=\overset{p-1}{\underset{x=0}{\dsum }}\chi (x^{3}+a^{3})x
\end{equation*}%
is divisible by $p.$
\end{theorem}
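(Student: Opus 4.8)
The plan is to exploit the hidden symmetry of the curve $y^{2}\equiv x^{3}+a^{3}$ under the substitution $x\mapsto wx$, where $w$ is a primitive cube root of unity modulo $p$. Since $p\equiv 1\,(\limfunc{mod}\,6)$ we have $p\equiv 1\,(\limfunc{mod}\,3)$, so such a $w\in \mathbb{F}_{p}^{\ast }$ exists with $w^{2}+w+1\equiv 0\,(\limfunc{mod}\,p)$, and in particular $w\not\equiv 0,1$. Fix the integer representative of $w$ in $\{2,\dots ,p-1\}$. The point of this substitution is that $(wx)^{3}+a^{3}\equiv w^{3}x^{3}+a^{3}\equiv x^{3}+a^{3}\,(\limfunc{mod}\,p)$, so, since $\chi $ depends only on the residue class, the weight $1+\chi (x^{3}+a^{3})$ appearing in $j(p)$ takes the same value at $x$, at $wx$ and at $w^{2}x$ (all reduced modulo $p$).

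First I would check that multiplication by $w$ partitions the nonzero residues $\{1,\dots ,p-1\}$ into orbits $\{x_{0},\,wx_{0},\,w^{2}x_{0}\}$ (reduced mod $p$), each of size exactly $3$: if two of these three residues coincided we would obtain $w\equiv 1$ or $w\equiv w^{2}$, i.e. $w\equiv 0,1$, contrary to the choice of $w$, and the orbit closes after three steps because $w^{3}\equiv 1$. This is exactly the computation carried out in the theorem immediately preceding this one, where it is shown that $x_{0}+wx_{0}+w^{2}x_{0}\equiv x_{0}(1+w+w^{2})\equiv 0\,(\limfunc{mod}\,p)$; hence the sum of the three least non-negative integer representatives of any such orbit is divisible by $p$.

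Next I would regroup the defining sum for $j(p)$ according to these orbits. The term $x=0$ contributes $(1+\chi (a^{3}))\cdot 0=0$. Every other term lies in a unique orbit $\{x_{0},wx_{0},w^{2}x_{0}\}$, and because the weight $1+\chi (x^{3}+a^{3})$ is constant on that orbit, the three terms together contribute
\begin{equation*}
\bigl(1+\chi (x_{0}^{3}+a^{3})\bigr)\bigl(x_{0}+wx_{0}+w^{2}x_{0}\bigr),
\end{equation*}
which is an integer multiple of $p$ by the previous paragraph. Summing over the orbits gives $p\mid j(p)$. Finally, $s(p)=j(p)-\sum_{x=0}^{p-1}x=j(p)-\tfrac{p(p-1)}{2}$, and since $p$ is odd $\tfrac{p-1}{2}$ is an integer, so $\tfrac{p(p-1)}{2}$ is divisible by $p$; hence $p\mid s(p)$ as well.

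The only genuinely delicate point is the bookkeeping between integer representatives and residue classes: the sum $j(p)$ is an honest sum of integers, so one must be careful that "the sum of the elements of an orbit is $\equiv 0\,(\limfunc{mod}\,p)$" is invoked with the least non-negative representatives — which is precisely what the earlier theorem on the solutions of $x^{3}\equiv t\,(\limfunc{mod}\,p)$ supplies. Everything else is routine; no appeal to the group law on $E_{a}(\mathbb{F}_{p})$ or to Hasse's bound is needed, and the argument is really just the observation that $y^{2}\equiv x^{3}+a^{3}$ is stable under $x\mapsto wx$.
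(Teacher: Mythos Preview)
Your argument is correct and is essentially the paper's own proof: both rest on Theorem~6, namely that the three solutions of $x^{3}\equiv t\ (\mathrm{mod}\ p)$ sum to $0$ modulo $p$, and both deduce $p\mid j(p)$ by breaking the sum into such triples. The only cosmetic difference is the bookkeeping---the paper first interprets $j(p)$ as the sum of abscissae of the affine points (Theorem~5) and then groups those points by their $y$-coordinate, so that each fibre is exactly one of these cube-root triples, whereas you stay on the $x$-side and group $\{1,\dots,p-1\}$ directly into multiplication-by-$w$ orbits, using that the weight $1+\chi(x^{3}+a^{3})$ is constant on each orbit; unwound, the two decompositions are identical, and your derivation of $p\mid s(p)$ from $s(p)=j(p)-\tfrac{p(p-1)}{2}$ just makes explicit a step the paper leaves to the reader.
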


\begin{proof}
For every value of y, let $y^{2}-a^{3}=t\,.$ Then the sum of the solutions
of the congruence $x^{3}\equiv t\,$\ $(\limfunc{mod}$ $p)\,$is congruent to $%
0$ by Theorem 6.

For all values of $y$, this is valid and hence the sum of all these
abscissae is congruent to $0.$
\end{proof}

The hypothesis $p\equiv 1$ $(\limfunc{mod}$ $6)$ is essential in this
Theorem 7, as the following counterexample shows: take $a=1,~p=11.$ Then the
first sum is easily seen to be $56~$and the second is easily seen to be $1$
and clearly neither of them is divisible by $11.$

We now look at the points on the curve having the same ordinate:

\begin{theorem}
Let $p\equiv 1$ $(\limfunc{mod}$ $6)$ be prime. The sum of the abscissae of
the points $(x,y)$ on the curve $y^{2}\equiv x^{3}+a^{3}\,(\limfunc{mod}$ $%
p) $ having the same ordinate $y$, is congruent to zero modulo $p$.
\end{theorem}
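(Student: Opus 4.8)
The plan is to deduce the statement directly from Theorem 6 by slicing the curve along a fixed ordinate. First I would fix an arbitrary $y\in\mathbb{F}_p$ and observe that a point $(x,y)$ lies on $y^2\equiv x^3+a^3\pmod{p}$ precisely when $x$ is a root of the cubic congruence $x^3\equiv y^2-a^3\pmod{p}$. Setting $t=y^2-a^3$, the abscissae of the points of the curve with this prescribed ordinate $y$ are exactly the solutions in $\mathbb{F}_p$ of $x^3\equiv t\pmod{p}$.

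Next I would apply Theorem 6 verbatim: since $p\equiv 1\pmod{6}$, the sum of the solutions of $x^3\equiv t\pmod{p}$ is congruent to $0$ modulo $p$. Therefore the sum of the abscissae of those points of the curve that share the ordinate $y$ is congruent to $0$ modulo $p$, which is the assertion of the theorem.

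The only issue worth a remark is the behaviour at degenerate values of $t$, and this is already absorbed into Theorem 6: if $p\mid t$ the congruence $x^3\equiv t\pmod{p}$ has the single root $x\equiv 0$, contributing $0$; if $t\notin K_p$ there are no such points and the empty sum is $0$ by convention; and if $t\in K_p^{\ast}$ there are three roots $x_0,\,x_0w,\,x_0w^2$, where $w$ is a primitive cube root of unity lying in $\mathbb{F}_p$ because $p\equiv 1\pmod{6}$, and $x_0+x_0w+x_0w^2=x_0(1+w+w^2)=0$. Hence the contribution vanishes modulo $p$ in every case.

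There is essentially no genuine obstacle here: this theorem is just the fixed-$y$ component of the computation already carried out in the proof of Theorem 7, and its arithmetic core—the identity $1+w+w^2=0$ together with $w^3=1$—is precisely Theorem 6. The one point to be careful about is the translation step, namely that ``having the same ordinate $y$'' is literally the condition ``being a root of $x^3\equiv y^2-a^3$'', and that the hypothesis $p\equiv 1\pmod{6}$ is exactly what makes $w$ an element of $\mathbb{F}_p$, so that all three cube roots genuinely live in the field and the cancellation is legitimate.
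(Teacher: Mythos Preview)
Your proposal is correct and follows essentially the same approach as the paper: fix $y$, substitute $t=y^{2}-a^{3}$, and invoke Theorem~6. The paper's proof is in fact just these two lines, without your added case analysis, which merely re-expands the content of Theorem~6.
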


\begin{proof}
Let $y$ be given. Then the congruence 
\begin{equation*}
x^{3}\equiv y^{2}-a^{3}\text{ }(\limfunc{mod}\,p)
\end{equation*}%
becomes 
\begin{equation*}
x^{3}\equiv t\,\text{\ }(\limfunc{mod}\,p)\,
\end{equation*}%
after a substitution $t=y^{2}-a^{3}$. The result then follows by Theorem 6.

Finally we consider the total number of points on a family of curves $%
y^{2}\equiv x^{3}+a^{3}$ $(\limfunc{mod}$ $p),\,$ for $a\equiv 0,1,...,$ $%
p-1\,$\ $(\limfunc{mod}\,p)\,$ and $p\equiv 1$ $(\limfunc{mod}$ $6)$ is
prime. We find that when $(a,p)=1,$ there are $p+1-2k$ or $p+1+2k$ points on
a curve $y^{2}\equiv x^{3}+a^{3}\,(\limfunc{mod}$ $p)$, for a suitable
integer $k$.
\end{proof}

\begin{theorem}
Let $p\equiv 1$ $(\limfunc{mod}$ $6)$ be prime and let $1\leq a\leq p-1$.
Let $N_{p,a}=\#E(\mathbb{F}_{p}).$ Then%
\begin{equation*}
\overset{p-1}{\underset{a=1}{\dsum }}N_{p,a}=p^{2}-1.
\end{equation*}
\end{theorem}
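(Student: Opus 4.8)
The plan is to reduce the identity to the vanishing of a double character sum, and then to evaluate that sum by a single multiplicative substitution. Starting from the count established just before Hasse's theorem,
\[
N_{p,a}=p+1+\underset{x\in\mathbb{F}_{p}}{\sum}\chi(x^{3}+a^{3}),
\]
and summing over the $p-1$ values $a=1,\dots,p-1$, we get $\sum_{a=1}^{p-1}N_{p,a}=(p-1)(p+1)+S=p^{2}-1+S$, where $S:=\sum_{a=1}^{p-1}\sum_{x\in\mathbb{F}_{p}}\chi(x^{3}+a^{3})$. So the theorem is equivalent to the statement that $S=0$.

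First I would interchange the order of summation and split off the term $x=0$. For $x=0$ the inner sum is $\sum_{a=1}^{p-1}\chi(a^{3})=\sum_{a=1}^{p-1}\chi(a)=0$, using $\chi(a)=\pm1\Rightarrow\chi(a^{3})=\chi(a)$ together with the orthogonality relation $\sum_{x\in\mathbb{F}_{p}^{\ast}}\chi(x)=0$. Hence $S=\sum_{x\in\mathbb{F}_{p}^{\ast}}\bigl(\sum_{a=1}^{p-1}\chi(x^{3}+a^{3})\bigr)$.

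The key step is to evaluate the inner sum for a fixed $x\neq0$. I would restore the index $a=0$ (its term is $\chi(x^{3})=\chi(x)$), so that $a$ ranges over all of $\mathbb{F}_{p}$, and then apply the change of variable $a\mapsto xa$, which is a bijection of $\mathbb{F}_{p}$ since $x\neq0$:
\[
\underset{a\in\mathbb{F}_{p}}{\sum}\chi(x^{3}+a^{3})=\underset{a\in\mathbb{F}_{p}}{\sum}\chi\bigl(x^{3}(1+a^{3})\bigr)=\chi(x)\underset{a\in\mathbb{F}_{p}}{\sum}\chi(1+a^{3}).
\]
The crucial observation is that the leftover sum $C:=\sum_{a\in\mathbb{F}_{p}}\chi(1+a^{3})$ is a constant independent of $x$. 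Therefore $\sum_{a=1}^{p-1}\chi(x^{3}+a^{3})=\chi(x)C-\chi(x)=(C-1)\chi(x)$, and summing over $x\in\mathbb{F}_{p}^{\ast}$ yields $S=(C-1)\sum_{x\in\mathbb{F}_{p}^{\ast}}\chi(x)=0$, again by orthogonality. Together with the first paragraph this gives $\sum_{a=1}^{p-1}N_{p,a}=p^{2}-1$.

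I do not expect a genuine obstacle; the computation is short once the substitution $a\mapsto xa$ is in hand. The only points requiring care are the bookkeeping of the index ranges — one must carry the term $x=0$ separately and must add back $a=0$ before performing the substitution — and the remark that the residual sum $C$ really is a constant, since it is precisely this independence of $x$ that allows the outer orthogonality relation to annihilate $S$. (In fact the argument uses only that $p$ is an odd prime, so that $y^{2}=x^{3}+a^{3}$ defines an elliptic curve and $\chi$ is the quadratic character; the hypothesis $p\equiv1\pmod{6}$ is not needed for this statement, but we retain it for uniformity with the rest of the paper.)
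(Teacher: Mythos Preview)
Your proof is correct and rests on the same idea as the paper's: a multiplicative change of variable to factor out a quadratic character, followed by the orthogonality relation $\sum_{u\in\mathbb{F}_p^{\ast}}\chi(u)=0$. The only organisational difference is that the paper substitutes in the $x$-variable ($x\mapsto ax$ for fixed $a$), obtaining directly the clean intermediate identity $N_{p,a}-(p+1)=\chi(a)\bigl(N_{p,1}-(p+1)\bigr)$ and then summing over $a$; you instead interchange the sums and substitute $a\mapsto xa$ for fixed $x$, which forces the extra bookkeeping with the $x=0$ and $a=0$ terms but leads to the same vanishing.
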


\begin{proof}
Since $1\leq a\leq p-1$, we have $(a,p)=1.$ Then the set of elements $%
a^{3}x^{3}$ modulo $p$ is the same as the set of $x^{3}$ modulo $p$. Then%
\begin{eqnarray*}
\underset{x\in \mathbb{F}_{p}}{\sum }\chi (x^{3}+a^{3}) &=&\underset{x\in 
\mathbb{F}_{p}}{\sum }\chi (a^{3}x^{3}+a^{3}) \\
&=&\chi (a^{3}).\underset{x\in \mathbb{F}_{p}}{\sum }\chi (x^{3}+1)~.
\end{eqnarray*}%
By the discussion at the beginning of section $2,$ we get%
\begin{equation*}
N_{p,a}-p-1=\chi (a^{3}).(N_{p,1}-p-1)\text{ .}
\end{equation*}%
Then by taking sum at both sides, we obtain%
\begin{equation*}
\overset{p-1}{\underset{a=1}{\dsum }}(N_{p,a}-p-1)=\overset{p-1}{\underset{%
a=1}{\dsum }}\chi (a^{3}).(N_{p,1}-p-1)\text{ .}
\end{equation*}%
Then%
\begin{eqnarray*}
\overset{p-1}{\underset{a=1}{\dsum }}N_{p,a}-\overset{p-1}{\underset{a=1}{%
\dsum }}(p+1) &=&(N_{p,1}-p-1).\overset{p-1}{\underset{a=1}{\dsum }}\chi
(a^{3}) \\
&=&(N_{p,1}-p-1).\overset{p-1}{\underset{a=1}{\dsum }}\chi (a)
\end{eqnarray*}%
using $\chi (a^{3})=\chi (a)$ as both sides are $1$ or $-1$ . Finally as
there are as many residues as non residues, we know that 
\begin{equation*}
\overset{p-1}{\underset{a=1}{\dsum }}\chi (a)=0\text{ }
\end{equation*}%
and by means of it, we conclude%
\begin{equation*}
\overset{p-1}{\underset{a=1}{\dsum }}N_{p,a}=p^{2}-1\text{,}
\end{equation*}%
as required.
\end{proof}

\begin{conclusion}
All the results concerning the number of points on $\mathbb{F}_{p\text{ }}$%
obtained here for prime $p\equiv 1$ $(\limfunc{mod}$ $6)$ can be generalized
to $\mathbb{F}_{p^{r}}$, for a natural number $r>1$, using the following
result:
\end{conclusion}

\begin{theorem}
(Weil Conjecture) The Zeta-function is a rational function of $T$ having the
form%
\begin{equation*}
Z(T;E/\mathbb{F}_{q})=\frac{1-aT+qT^{2}}{(1-T)(1-qT)}
\end{equation*}%
where only the integer $a$ depends on the particular elliptic curve $E$. The
value $a$ is related to $N=N_{1}$ as follows:%
\begin{equation*}
N=q+1-a\text{.}
\end{equation*}
\end{theorem}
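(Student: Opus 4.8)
The plan is to run the classical argument that computes the congruence zeta-function directly from its defining power series
\[
Z(T;E/\mathbb{F}_{q})=\exp\!\left(\sum_{r=1}^{\infty}N_{r}\frac{T^{r}}{r}\right),
\qquad N_{r}=\#E(\mathbb{F}_{q^{r}}),
\]
so that the whole statement reduces to finding a closed form for the sequence $(N_{r})_{r\ge1}$ of point-counts over the extension fields (the point at infinity is included in each $N_{r}$).

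First I would invoke the Frobenius endomorphism $\phi$ of $E/\mathbb{F}_{q}$, which satisfies a quadratic relation $\phi^{2}-a\phi+q=0$ in $\operatorname{End}(E)$; here $a$ is exactly the integer of the statement, and factoring over $\mathbb{C}$ one writes $\phi$'s ``eigenvalues'' $\alpha,\beta$ with $\alpha+\beta=a$ and $\alpha\beta=q$. Since $\mathbb{F}_{q^{r}}$-points are precisely the fixed points of $\phi^{r}$ and $\phi^{r}-1$ is a separable isogeny, counting its kernel by its degree gives
\[
N_{r}=\#\ker(\phi^{r}-1)=\deg(\phi^{r}-1)=(\alpha^{r}-1)(\beta^{r}-1)=q^{r}+1-\alpha^{r}-\beta^{r}.
\]
For the curves $E_{a}\colon y^{2}=x^{3}+a^{3}$ of this paper, the base value $N_{1}=p+1+\sum_{x\in\mathbb{F}_{p}}\chi(x^{3}+a^{3})$ was already recorded at the start of Section~2, so the integer of the statement is $a=-\sum_{x\in\mathbb{F}_{p}}\chi(x^{3}+a^{3})$ (with the same-letter abuse tolerated), and Hasse's Theorem~1 gives $|a|<2\sqrt{q}$, forcing $\alpha,\beta$ to be a complex-conjugate pair of modulus $\sqrt{q}$.

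With $N_{r}=q^{r}+1-\alpha^{r}-\beta^{r}$ in hand the rest is formal. Substituting and using $\sum_{r\ge1}(\lambda T)^{r}/r=-\log(1-\lambda T)$ for $\lambda=1,q,\alpha,\beta$,
\[
\sum_{r=1}^{\infty}N_{r}\frac{T^{r}}{r}=-\log(1-T)-\log(1-qT)+\log(1-\alpha T)+\log(1-\beta T),
\]
and exponentiating yields
\[
Z(T;E/\mathbb{F}_{q})=\frac{(1-\alpha T)(1-\beta T)}{(1-T)(1-qT)}=\frac{1-(\alpha+\beta)T+\alpha\beta T^{2}}{(1-T)(1-qT)}=\frac{1-aT+qT^{2}}{(1-T)(1-qT)}.
\]
This is visibly a rational function of $T$ whose denominator does not involve $E$, so only $a$ depends on the curve; comparing the coefficients of $T$ (equivalently, taking $r=1$ above) gives $N=N_{1}=q+1-a$.

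The single non-formal ingredient, and therefore the main obstacle, is the identity $N_{r}=q^{r}+1-\alpha^{r}-\beta^{r}$, i.e.\ the claim that one integer $a$ controls all the extension point-counts. Proving it needs the Frobenius-endomorphism machinery: that $E(\mathbb{F}_{q^{r}})$ is the fixed locus of $\phi^{r}$, that $\phi^{r}-1$ is separable so its kernel has order equal to its degree, and that $\deg$ on $\operatorname{End}(E)$ is the positive-definite quadratic form under which $\phi$ has characteristic polynomial $X^{2}-aX+q$. Granting this, the negativity of the discriminant $a^{2}-4q$ is precisely our Theorem~1, and everything else is the logarithm bookkeeping displayed above.
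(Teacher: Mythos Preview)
The paper does not prove this theorem at all: it is quoted as the Weil Conjecture, a known result, with no argument supplied. In fact the paper uses it in the opposite direction from yours---it \emph{assumes} the rational form of $Z(T;E/\mathbb{F}_{q})$, factors the numerator as $(1-\alpha T)(1-\beta T)$, and then recovers $N_{r}=q^{r}+1-\alpha^{r}-\beta^{r}$ by logarithmic differentiation. So there is no ``paper's own proof'' to compare against.

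Your sketch is the standard proof of the Weil conjecture for elliptic curves (essentially the argument in Silverman's \emph{Arithmetic of Elliptic Curves}, Chapter~V), and it is correct in outline. You have accurately flagged the one substantive step: the identity $\deg(\phi^{r}-1)=(\alpha^{r}-1)(\beta^{r}-1)$, which requires knowing that $\deg$ is a positive-definite quadratic form on $\operatorname{End}(E)$ and that $\phi$ satisfies $\phi^{2}-a\phi+q=0$. None of that machinery is developed in this paper, so if you were writing for this paper you would either cite Silverman or simply state the theorem without proof, as the authors do.
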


\bigskip In addition, the discriminant of the quadratic polynomial in the
numerator is negative, and so the quadratic has two conjugate roots $\frac{1%
}{\alpha }$ and $\frac{1}{\beta }$ with absolute value $\frac{1}{\sqrt{q}}$.
Writing the numerator in the form $(1-\alpha T)(1-\beta T)$ and taking the
derivatives of logarithms of both sides, one can obtain the number of $%
F_{q^{r}}$- points on $E,$ denoted by $N_{r}$, as follows:%
\begin{equation*}
N_{r}=q^{r}+1-\alpha ^{r}-\beta ^{r}\text{, \ }r=1,\text{ }2,\text{ }...
\end{equation*}%
\textbf{Example 12 }Let us find the $F_{7}$-points on the elliptic curve $%
y^{2}=x^{3}+4^{3}$. There are $N_{1}=12$ $F_{7}$-points on the elliptic
curve: 
\begin{equation*}
(0,1),~(0,6),~(1,3),~(1,4),~(2,3),~(2,4),~(3,0),~(4,3),~(4,4),~(5,0),~(6,0)\ 
\text{and }\circ .
\end{equation*}%
~ Now as $r=2,$ we have $a=-4.$ Then from the quadratic equation 
\begin{equation*}
1+4T+7T^{2}=0\text{,}
\end{equation*}%
$\alpha =-2-\sqrt{3}i$ and $\beta =-2+\sqrt{3}i$ and finally $N_{2}=48$.
Similarly $N_{3}=324$ can be calculated.

\begin{tabular}{l}
Musa Demirci, G\"{o}khan Soydan, Ismail Naci Cang\"{u}l \\ 
Department of Mathematics \\ 
Uluda\u{g} University \\ 
16059 Bursa, TURKEY \\ 
mdemirci@uludag.edu.tr, gsoydan@uludag.edu.tr, cangul@uludag.edu.tr%
\end{tabular}

\end{document}